\newtheorem{theorem}{Theorem}
\newtheorem{corollary}[theorem]{Corollary}
\newtheorem{definition}[theorem]{Definition}
\newtheorem{example}[theorem]{Example}
\newtheorem{lemma}[theorem]{Lemma}
\newtheorem{remark}[theorem]{Remark}
\newcommand{\cF}{\mathcal{F}}
\newcommand{\cP}{\mathcal{P}}
\newcommand{\cT}{\mathcal{T}}
\newcommand{\ZZ}{\mathbb{Z}}
\renewcommand{\max}{\text{max}}
\newcommand{\supp}{\text{supp}}
\renewcommand{\eqref}[1]{{\rm (\ref{#1})}}
\title{Proof of the Kontsevich Non-Commutative Cluster Positivity Conjecture}
\author{Dylan Rupel}
\address{\noindent Department of Mathematics, University of Oregon,
 Eugene, OR 97403}
\email{drupel@uoregon.edu}
\begin{document}
\begin{abstract}
 We extend the Lee-Schiffler Dyck path model to give a proof of the Kontsevich non-commutative cluster positivity conjecture with unequal parameters.
\end{abstract}
\maketitle

Let $k$ be any field of characteristic zero.  For any $r\in\ZZ_{>0}$, consider the following $k$-linear automorphism of the skew-field $K=k(x,y)$ of rational functions in non-commutative variables $x$ and $y$:
\[F_r: (x,y)\mapsto (xyx^{-1},(1+y^r)x^{-1}).\]
\noindent Our main result is the following.
\begin{theorem}[Kontsevich conjecture]\label{kont_conj}
 For any $r_1,r_2\in\ZZ_{>0}$ and any $k\ge0$, the elements $x_k=\underbrace{F_{r_1}F_{r_2}F_{r_1}\cdots}_k(x)$ are given by non-commutative Laurent polynomials in $x$ and $y$ with non-negative integer coefficients.
\end{theorem}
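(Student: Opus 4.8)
The plan is to convert the statement into a three-term recursion and then solve that recursion explicitly by extending Lee--Schiffler's Dyck path formula for commutative rank-$2$ cluster variables to the non-commutative setting. Writing $A_k=\underbrace{F_{r_1}F_{r_2}\cdots}_{k}$, putting $y_k=A_k(y)$, and letting $\epsilon(j)\in\{1,2\}$ record the parity of $j$, the relation $A_{k+1}=A_k\circ F_{r_{\epsilon(k+1)}}$ together with the fact that each $A_k$ is a ring automorphism of $K$ gives
\[
x_{k+1}=x_k\,y_k\,x_k^{-1},\qquad y_{k+1}=\bigl(1+y_k^{\,r_{\epsilon(k+1)}}\bigr)x_k^{-1};
\]
eliminating $y_k$ yields $x_{k+2}=x_{k+1}x_k^{-1}\bigl(1+x_{k+1}^{\,\rho_{k+1}}\bigr)x_{k+1}^{-1}$, where $\rho_j=r_1$ for $j$ odd and $\rho_j=r_2$ for $j$ even, which specializes commutatively to the alternating rank-$2$ exchange relation $x_kx_{k+2}=1+x_{k+1}^{\,\rho_{k+1}}$. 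Conjugating the whole sequence by $x$ leaves the recursion invariant and normalizes the initial data to $(x,y)$; since $x_k$ is recovered by conjugating back by the monomial $x$, an operation that does not affect coefficients in the basis of reduced words, it suffices to prove that the sequence $\widehat x_k$ defined by $(\widehat x_0,\widehat x_1)=(x,y)$ and the recursion above has non-negative integer coefficients. I would also carry along the non-commutative lift $\widehat y_k$ of $A_k(y)$, for which the recursion becomes the two \emph{inverse-free} identities $\widehat x_{k-1}\widehat y_k=1+\widehat x_k^{\,\rho_k}$ and $\widehat x_k\widehat y_k=\widehat x_{k+1}\widehat x_k$ --- the form best suited to a combinatorial attack.

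Next I would set up the non-commutative Dyck path model. For the commutative rank-$2$ cluster algebra with initial cluster $(x,y)$ and exchange parameters $(r_1,r_2)$, Lee--Schiffler express $\widehat x_k$ (under the appropriate matching of indices and parameters) as a sum $\sum_{(S_1,S_2)}m(S_1,S_2)$ over compatible pairs $(S_1,S_2)$ of edge-subsets of the maximal Dyck path associated to $\widehat x_k$, where each $m(S_1,S_2)$ is a Laurent monomial in $x,y$ occurring with coefficient $1$. The extension is to lift each $m(S_1,S_2)$ to a non-commutative word $M(S_1,S_2)$ in $x^{\pm1},y^{\pm1}$ by reading the maximal Dyck path from one endpoint to the other and emitting, for each successive edge, one letter determined by the edge's orientation and by whether it lies in $S_1$, in $S_2$, or outside both (the edges outside supplying the negative powers, in the order the reading dictates); one then sets $X_k=\sum_{(S_1,S_2)}M(S_1,S_2)$, and defines $Y_k$ analogously from the Dyck path model attached to $\widehat y_k$. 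By construction $X_k$ and $Y_k$ are non-commutative Laurent polynomials in $x,y$ with non-negative integer coefficients whose commutative images are the Lee--Schiffler expressions, and one checks directly that $X_0=x$ and $X_1=y$.

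The heart of the proof is then to show that $X_k$ and $Y_k$ satisfy the two inverse-free identities above. Granting that, the recursion and the initial data determine the sequence uniquely inside $K$, so $X_k=\widehat x_k$, and hence $x_k=xX_kx^{-1}$ is a non-commutative Laurent polynomial with non-negative integer coefficients, which is the theorem; positivity is automatic once the formula is established, since $X_k$ is by construction a sum of words with coefficient $1$. To prove the identities I would work on the level of Dyck paths: their commutative shadows are exactly Lee--Schiffler's recursive relations among compatible pairs, established there by explicit bijections and a cancellation-free regrouping of terms, so the task reduces to checking that each such bijection carries the word $M(S_1,S_2)$ of a compatible pair to the product of the words of the pieces it is assembled from --- equivalently, that the decomposition of the maximal Dyck path of $\widehat x_{k+2}$ into those of $\widehat x_{k+1}$ and $\widehat x_k$ underlying the recursion is compatible, piece by piece, with the reading convention. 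I expect the main obstacle to be precisely this ordering bookkeeping: the reading convention has to be chosen so as to interact correctly with the Dyck-path surgery, and because $r_1\neq r_2$ the maximal Dyck path is no longer symmetric, so the symmetry shortcut available in the equal-parameter case is unavailable and the two identities $\widehat x_{k-1}\widehat y_k=1+\widehat x_k^{\,\rho_k}$ and $\widehat x_k\widehat y_k=\widehat x_{k+1}\widehat x_k$ must be handled by genuinely separate arguments. Once the reading convention is pinned down, what remains is a delicate but essentially routine induction tracking the positions of the individual letters through the Lee--Schiffler bijections.
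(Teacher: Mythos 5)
Your reduction is sound as far as it goes: since each $A_k$ is an automorphism of $K$, the relations $x_{k+1}=x_ky_kx_k^{-1}$ and $x_{k-1}y_k=1+x_k^{\rho_k}$ do hold, conjugating by $x$ normalizes the initial data to $(x,y)$ without affecting positivity, and the two inverse-free identities together with $(\widehat x_0,\widehat x_1)=(x,y)$ determine the sequence uniquely in the skew field. The overall philosophy --- exhibit an explicitly positive Dyck-path expansion and prove it equals $\widehat x_k$ --- is also the philosophy of the paper (Theorem~\ref{main}). But the route you propose for the verification is where the proposal breaks down.

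The genuine gap is that the step you defer as ``delicate but essentially routine'' is the entire theorem, and there is concrete reason to doubt it goes through as described. First, you never pin down the per-edge letter assignment, and it is not clear that the compatible-pair monomials $m(S_1,S_2)$ localize at all to a one-factor-per-edge reading for which the word-level identities hold: in $\widehat x_{k-1}\widehat y_k=1+\widehat x_k^{\rho_k}$ the power $\widehat x_k^{\rho_k}$ concatenates letters coming from $\rho_k$ independent summands, so a cancellation-free regrouping of commutative monomials need not match reduced words letter by letter, and in the non-commutative setting nothing can be reordered to fix a mismatch. Second, the structure of the actual solution indicates that the naive lift of the compatible-pair model is not the right object: the expansion proved in Theorem~\ref{main} is indexed not by compatible pairs but by non-overlapping collections of colored subpaths (Dyck prefixes, $(m,w)$-Dyck suffixes, short suffixes) subject to the admissibility conditions $C1$--$C3$, with per-edge weights that depend on hooks and diagonal steps; and its proof never verifies the exchange recursion directly. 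Instead it applies the single automorphism $F_{r_2}$ to the auxiliary sum $z_{n-1}$ over the larger set $\tilde{\cF}(D_n)$, tracks how the $(1+y^{r_2})^{-1}$ factors cancel (Lemma~\ref{z_rec}), and eliminates the resulting correction terms, organized by the filtration $\cT^{\ge u}(D_n)$, only through the simultaneous induction of Lemmas~\ref{F_rec} and~\ref{x_comp}. That the published arguments (here and in the equal-parameter case of Lee--Schiffler) had to avoid a direct bijective proof of the exchange relation, and needed a finer index set than compatible pairs, is strong evidence that your step (iii) is not bookkeeping but the missing core of the argument; as written, the proposal asserts rather than supplies it.
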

\begin{remark}
 Using a symmetry argument, Theorem~\ref{kont_conj} implies an analogous statement for $y_k=\underbrace{F_{r_1}F_{r_2}F_{r_1}\cdots}_k(y)$.
\end{remark}
 
\noindent The Laurentness of these expressions was established by Usnich \cite{u} for $r_1=r_2$ and by Berenstein-Retakh \cite{br} for general $r_1,r_2$.  The positivity was shown by Di Francesco-Kedem \cite{dk} for $r_1r_2=4$ and by Lee-Schiffler \cite{ls} for $r_1=r_2$.  We follow the Lee-Schiffler approach in this note.

Fix integers $r_1,r_2\in\ZZ_{>0}$.  Our proof will make use of two-parameter Chebyshev polynomials $U_{k,j}$, $k,j\in \ZZ$, defined recursively by: 
$U_{-1,j}=0$, $\ U_{0,j}=1$, $\ U_{k+1,j+1}=r_jU_{k,j}-U_{k-1,j-1}$, where $r_j=\begin{cases} r_1, & \text{ if $j$ is odd;}\\ r_2, & \text{ if $j$ is even.}\end{cases}$\\ 
\noindent From now on we will work under the assumption $r_1r_2\ge5$.  The cases $r_1r_2\in\{1,4\}$ were settled in \cite{u} and \cite{dk} and the remaining cases $r_1r_2\in\{2,3\}$ are given explicitly at http://pages.uoregon.edu/drupel/dyck\_examples.pdf.

Fix $n\ge2$.  Consider the rectangle $R_n\subset\ZZ^2$ with corner vertices $(0,0)$ and $(U_{n-3,1}-U_{n-4,2},U_{n-4,2})$.  When $R_n$ lies in the first quadrant, a \emph{Dyck path} is a lattice path in $R_n$ starting at $(0,0)$ and taking North or East steps to end at $(U_{n-3,1}-U_{n-4,2},U_{n-4,2})$ such that the path never crosses the main diagonal of $R_n$ and the slope of each subpath beginning at $(0,0)$ does not exceed the slope of the main diagonal.  Here we consider a vertical edge to have slope $\infty$.  We modify this definition slightly when $R_n$ lies in the second quadrant by replacing the East step with a diagonal $(-1,1)$-upstep and considering vertical edges to have slope $-\infty$.  When $n=2$, $R_n$ lies in the fourth quadrant and we use a diagonal $(1,-1)$-downstep.  We will call a Dyck path \emph{maximal} if no subpath of another Dyck path lies closer to the main diagonal.  Write $D_n$ for the maximal Dyck path in $R_n$.  The next Lemma follows by induction from the definitions.

\begin{lemma}\label{Dyck_rec} Denote $\epsilon_k:=\max\{0,2-r_{k-1}\}$, $\delta_k:=\epsilon_k+2\epsilon_{k-1}+1$ for $k\in \ZZ$. 
 Suppose $k-\delta_k\ge 4$.  Then the Dyck path $D_k$ consists of $r_{k-\epsilon_{k-1}}-\delta_k+1$ copies of $D_{k-1-\epsilon_{k-1}}$ followed by a copy of $D_{k-1-\epsilon_{k-1}}$ with its first $D_{k-1-\delta_k}$ removed.
\end{lemma}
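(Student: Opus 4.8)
The plan is to argue by induction on $k$, with the induction seeded by the small-index Dyck paths (and their decompositions) that one computes directly from the definitions; since $\epsilon_j\in\{0,1\}$ and, by $r_1r_2\ge 5$, $\epsilon_{k-1}$ and $\epsilon_k$ are never both $1$, we have $\delta_k\le 3$, so only finitely many small $k$ lie outside the hypothesis $k-\delta_k\ge 4$ and must be checked separately. I would split the inductive step into a geometric assertion about maximal Dyck paths and an arithmetic assertion about the corner coordinates $(a_k,b_k):=(U_{k-3,1}-U_{k-4,2},\,U_{k-4,2})$ of $R_k$, the link between them being that the maximal Dyck path of $R_k$ is uniquely determined by $(a_k,b_k)$: it is the lattice path that hugs the main diagonal of $R_k$ as closely as the non-crossing and bounded-subpath-slope conditions allow, and those conditions pin it down column by column. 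The same uniqueness holds, with the evident modifications, under the second-quadrant convention (diagonal $(-1,1)$-upsteps, vertical slope $-\infty$) and in the degenerate fourth-quadrant case $n=2$.

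For the geometric part, the key statement is a direct extension to unequal parameters of the self-similarity of maximal Dyck paths used by Lee--Schiffler: if the corner of $R_k$ can be written as $m\,(a',b')-(a'',b'')$, where $(a',b')$ is the corner of a rectangle $R'$ whose maximal Dyck path begins with the maximal Dyck path $D_{R''}$ of a rectangle $R''$ with corner $(a'',b'')$, then the maximal Dyck path of $R_k$ is $m-1$ consecutive copies of $D_{R'}$ followed by one further copy of $D_{R'}$ with its initial $D_{R''}$ deleted. One proves this by checking that the concatenated path (i) has the correct endpoint --- which is exactly the arithmetic identity below --- and (ii) hugs the main diagonal of $R_k$ with every initial subpath of admissible slope; both facts reduce, via the ``slow'' Euclidean algorithm on $(a_k,b_k)$, to the corresponding properties of $R'$ and $R''$, after which uniqueness concludes. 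The same reduction also shows that an initial segment of a maximal Dyck path in a larger rectangle is itself the maximal Dyck path of the sub-rectangle it spans, which is precisely what makes the phrase ``with its first $D_{k-1-\delta_k}$ removed'' meaningful.

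For the arithmetic part, I would use that, because $r_j$ depends only on the parity of $j$, so do the numbers $U_{k,j}$ in the second index; writing $V_k:=U_{k,j}$ for $j$ odd and $W_k:=U_{k,j}$ for $j$ even reduces the two-index recursion to $V_{k+1}=r_2W_k-V_{k-1}$, $W_{k+1}=r_1V_k-W_{k-1}$ with $V_{-1}=W_{-1}=0$, $V_0=W_0=1$. From these two recursions one then derives the identity
\[
(a_k,b_k)=\bigl(r_{k-\epsilon_{k-1}}-\delta_k+2\bigr)\,(a_{k-1-\epsilon_{k-1}},b_{k-1-\epsilon_{k-1}})-(a_{k-1-\delta_k},b_{k-1-\delta_k}),
\]
which is precisely the endpoint relation demanded by the geometric decomposition (with $m=r_{k-\epsilon_{k-1}}-\delta_k+2$, $R'=R_{k-1-\epsilon_{k-1}}$, $R''=R_{k-1-\delta_k}$). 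Since $\epsilon_j=1$ exactly when $r_{j-1}=1$, this identity --- together with the compatibility $\epsilon_{k-2-\epsilon_{k-1}}=\epsilon_k+\epsilon_{k-1}$, which ensures that $D_{k-1-\delta_k}$ really does occur as the initial block of $D_{k-1-\epsilon_{k-1}}$ --- needs to be verified only in the finitely many cases indexed by the parity of $k$ and by which, if either, of $r_1,r_2$ equals $1$.

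I expect the main obstacle to be the geometric self-similarity step in the unequal-parameter setting: one must formulate and prove carefully that concatenating the prescribed blocks respects the non-crossing and slope conditions, including across the passage between the first-quadrant, second-quadrant, and (when $n=2$) fourth-quadrant conventions, and that the identification of initial segments with maximal Dyck paths of sub-rectangles survives those transitions. Once this is in hand, what remains is the finite --- if somewhat lengthy --- bookkeeping of the Chebyshev identity above together with the explicit determination of $\epsilon_k$ and $\delta_k$ in each parity case.
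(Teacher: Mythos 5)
Your overall strategy is sound and is, as far as one can tell, exactly what the paper intends: the paper's entire argument for this lemma is the sentence ``follows by induction from the definitions,'' and your plan --- uniqueness of the maximal Dyck path once the corner of $R_k$ is fixed, a Lee--Schiffler-style self-similarity statement reducing the decomposition to an endpoint identity plus the slope/non-crossing conditions, and verification of that identity from the parity-reduced recursions $V_{k+1}=r_2W_k-V_{k-1}$, $W_{k+1}=r_1V_k-W_{k-1}$ --- is the natural way to fill that in, including your correct observation that $r_1r_2\ge5$ forces $\epsilon_{k-1}\epsilon_k=0$ and hence $\delta_k\le3$.

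However, the arithmetic identity you commit to verifying is false as written, and this is the step where the plan breaks. You took the statement literally as ``$r_{k-\epsilon_{k-1}}-\delta_k+1$ \emph{full} copies of $D_{k-1-\epsilon_{k-1}}$ followed by one truncated copy,'' which forces the multiplier $m=r_{k-\epsilon_{k-1}}-\delta_k+2$ in your endpoint relation. Already for $r_1=r_2=3$, $k=5$ this fails: there $(a_5,b_5)=(U_{2,1}-U_{1,2},U_{1,2})=(5,3)$, while your right-hand side gives $4\,(2,1)-(1,0)=(7,4)$. A direct computation shows $D_5=EEN\,EEN\,EN$, i.e.\ \emph{two} full copies of $D_4$ followed by $D_4$ with its first $D_3$ removed, so the correct relation is
\begin{equation*}
(a_k,b_k)=\bigl(r_{k-\epsilon_{k-1}}-\delta_k+1\bigr)\,(a_{k-1-\epsilon_{k-1}},b_{k-1-\epsilon_{k-1}})-(a_{k-1-\delta_k},b_{k-1-\delta_k}),
\end{equation*}
that is, a total of $r_{k-\epsilon_{k-1}}-\delta_k+1$ copies of $D_{k-1-\epsilon_{k-1}}$ of which only the \emph{last} is truncated (equivalently $r_{k-\epsilon_{k-1}}-\delta_k$ full copies followed by the truncated one). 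The same pattern holds in the $\epsilon_{k-1}=1$ case, e.g.\ $r_1=1$, $r_2=5$, $k=7$ gives corner $(8,3)=3\,(3,1)-(1,0)$, not $4\,(3,1)-(1,0)$. So the lemma's phrasing must be read with the truncated copy counted among the $r_{k-\epsilon_{k-1}}-\delta_k+1$ copies; with the multiplier corrected to $r_{k-\epsilon_{k-1}}-\delta_k+1$ in both your geometric self-similarity lemma and your Chebyshev identity, your induction scheme goes through, but as written the central verification cannot succeed.
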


Let $U_n=max\{|U_{n-3,1}|,|U_{n-4,2}|\}$ be the number of edges in $D_n=(\omega_0,\alpha_1,\omega_1,\alpha_2,\ldots,\alpha_{U_n},\omega_{U_n})$, where the vertices of $D_n$ are labeled by $\omega_0,\omega_1,\ldots,\omega_{U_n}$ and $\alpha_i$ is the edge connecting $\omega_{i-1}$ and $\omega_i$.  Let $i_1,\ldots,i_{U_{n-4,2}}$ denote the increasing sequence so that $\alpha_{i_j}$ makes an upward step.  We will write $\nu_0,\ldots,\nu_{U_{n-4,2}}$ for the sequence of vertices satisfying $\nu_0=(0,0)$ and $\nu_j=\omega_{i_j}$.

\begin{definition}\label{colors}
 For $i<j$ denote by $s_{ij}$ the slope of the line from $\nu_i$ to $\nu_j$ and by $s$ the slope of the main diagonal of $R_n$.  For $0\le i<k\le U_{n-4,2}$ let $\alpha(i,k)$ be the subpath of $D_n$ from $\nu_i$ to $\nu_k$ labeled/colored as follows:
 \begin{enumerate}
  \item If $s_{it}\le s_n$ for all $t$ with $i<t\le k$, then $\alpha(i,k)$ is called a \emph{Dyck prefix} (blue).
  \item If $s_{it}>s_n$ for some $t$ with $i<t\le k$, then
   \begin{enumerate}
    \item if the \emph{smallest} such $t$ is of the form $i+U_{m,2}-wU_{m-1-\epsilon_{m-1},2}$ for some integers $1\le m\le n-4$ and $1\le w<r_{m-\epsilon_{m-1}}-\delta_m$, then $\alpha(i,k)$ is called an $(m,w)$-Dyck suffix (green).
    \item otherwise, $\alpha(i,k)$ is called a \emph{short suffix} (red).
   \end{enumerate}
 \end{enumerate}
\end{definition}

Write $\cP(D_n)=\{\alpha(i,k):0\le i<k\le U_{n-4,2}\}\cup\{\alpha_1,\ldots,\alpha_{U_n}\}$ for the set of admissible subpaths of $D_n$.  For $\beta\subset\cP(D_n)$ we define the support $\supp(\beta)\subset D_n$ in the natural way.  We will use the term \emph{hook} for the supports of the subpaths $\alpha(k,k+1)$.  It will be convenient to refer to a hook as type 1, 2, or 3 depending on whether the horizontal displacement from the bottom to the top of the hook is $r_2-1$, $r_2-2$, or $r_2-3$, respectively.  

Call $\beta\subset\cP(D_n)$ an \emph{overlapping} collection if there exists either $\alpha(i,k),\alpha(i',k')\in\beta$ which share a vertex or $\alpha_j,\alpha(i,k)\in\beta$ with $\alpha_j\in\alpha(i,k)$.  We will need the following $K$-valued weightings on non-overlapping collections.

\begin{definition}
 Write $\varepsilon_i=\begin{cases} 1 & \text{if $\alpha_i$ is vertical;}\\ 0 & \text{otherwise.}\end{cases}$  For each non-overlapping collection $\beta\subset\cP(D_n)$ define
 \begin{equation*}
  \beta_{[i]}=\begin{cases}
   y^{r_1-\varepsilon_i}x^{-1}, & \text{ if $\alpha_i\notin\supp(\beta)$;}\\
   y^{-\varepsilon_i}x^{-1}, & \text{ if $\alpha_i\in\beta$ and $\alpha_i$ is not diagonal;}\\
   x^1y^{-1}x^{-1}, & \text{ if $\alpha_i\in\beta$ and $\alpha_i$ is diagonal with an upstep;}\\
   x^0y^1, & \text{ if $\alpha_i\in\beta$ and $\alpha_i$ is diagonal with a downstep;}\\
   x^0y^0, & \text{ if $\alpha_i\in\alpha(j,k)\in\beta$ is horizontal;}\\
   x^hy^{-1}x^{-1}, & \text{ if $\alpha_i\in\alpha(j,k)\in\beta$ is the last edge of a hook of type h.}\\
  \end{cases}
 \end{equation*}
\end{definition}
\noindent We have the following refinement of Theorem \ref{kont_conj}.  

\begin{theorem}\label{main}
 Suppose $r_1,r_2\in\ZZ_{>0}$.  Write $q=xyx^{-1}y^{-1}$.  Then for $n\ge2$ we have $x_{n-1}=\sum\limits_{\beta\in\cF(D_n)} q\prod\limits_{i=1}^{U_n}\beta_{[i]}$, where the product is taken in the natural order and the sum ranges over the set $\cF(D_n)$ of non-overlapping collections $\beta\subset\cP(D_n)$ subject to the conditions:
 \begin{itemize}
  \item[$C1$:] if $\alpha_i$ is diagonal, then $\alpha_i$ is supported on $\beta$;
  \item[$C2$:] if $\alpha(i,k)\in\beta$ is a short suffix, then the preceding non-diagonal edge of $\nu_i$ is supported on $\beta$;
  \item[$C3$:] if $\alpha(i,k)\in\beta$ is an $(m,w)$-Dyck suffix, then at least one of the preceding $U_{m-1,1}-wU_{m-2-\epsilon_{m-1},1}$ non-diagonal edges of $\nu_i$ is supported on $\beta$.
 \end{itemize}
\end{theorem}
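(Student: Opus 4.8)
The plan is to argue by induction on $n$, the small cases (those below the threshold at which Lemma~\ref{Dyck_rec} applies) being handled by direct computation. For the inductive step I would exploit the factorization $\underbrace{F_{r_1}F_{r_2}F_{r_1}\cdots}_{n-1}=F_{r_1}\circ\underbrace{F_{r_2}F_{r_1}\cdots}_{n-2}$, which gives $x_{n-1}=F_{r_1}(x'_{n-2})$, where $x'_{n-2}$ and its Dyck path $D'_{n-1}$ are the objects attached to $x_{n-2}$ after interchanging $r_1$ and $r_2$; by the parameter-swapped inductive hypothesis, $x'_{n-2}=\sum_{\beta\in\cF(D'_{n-1})}q\prod_i\beta_{[i]}$. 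Two elementary computations power the step. First, $F_{r_1}(q)=xyx^{-1}(1+y^{r_1})y^{-1}(1+y^{r_1})^{-1}=xyx^{-1}y^{-1}=q$, so the prefactor is preserved. Second, $F_{r_1}$ carries every weight $\beta_{[i]}$ to a product of powers of $x^{\pm1}$ interleaved with factors $(1+y^{r_1})^{\pm1}$ — for instance $F_{r_1}(y^{m}x^{-1})=\bigl((1+y^{r_1})x^{-1}\bigr)^{m}xy^{-1}x^{-1}$ — and the positivity mechanism is that, once the inverse factors cancel against their neighbours, expanding the remaining factors $(1+y^{r_1})$ produces a sum of pairwise distinct noncommutative monomials, each with coefficient $+1$, indexed by the subsets recording which factors one chooses to open. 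Applying $F_{r_1}$ term by term thus rewrites $x_{n-1}=\sum_\beta q\prod_i F_{r_1}(\beta_{[i]})$ as a sum indexed by pairs (a collection $\beta\in\cF(D'_{n-1})$, a choice of such subsets).

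The heart of the argument is to identify this indexing set with $\cF(D_n)$ so that weights match term by term. Combinatorially this identification is forced by Lemma~\ref{Dyck_rec}: applying $F_{r_1}$ refines each edge of $D'_{n-1}$ into a run of edges, and that lemma says precisely that $D_n$ is the resulting concatenation of copies of a smaller Dyck path together with one truncated copy. I would then verify three things. (1) The noncommutative product $\prod_i F_{r_1}(\beta_{[i]})$, after carrying out the internal cancellations of the form $x^{-1}x$ and $x^{-1}y^{a}x^{-1}x$ between consecutive factors, telescopes exactly into $\prod_j\beta'_{[j]}$ for the refined path, leaving a single residual $q$ — this is where $F_{r_1}(q)=q$ is needed, to absorb the commutator defect accumulated by sliding the interior $x^{-1}$'s past the $y$'s. (2) The six cases of the weight $\beta'_{[j]}$ — in particular the hook weights $x^{h}y^{-1}x^{-1}$ for $h=1,2,3$ and the diagonal weights $x^{1}y^{-1}x^{-1}$ and $x^{0}y^{1}$ — arise at exactly those edges of $D_n$ where Definition~\ref{colors} prescribes them. (3) The admissibility conditions correspond: a Dyck prefix, a short suffix, or an $(m,w)$-Dyck suffix of $D'_{n-1}$ refines into Dyck prefixes, short suffixes, or Dyck suffixes of $D_n$ with the index $m$ shifted; conditions $C1$, $C2$, $C3$ on $\beta'$ become the statement that the subset-choices are unconstrained except where a composite subpath or a diagonal edge forces an anchoring edge; and the count $U_{m-1,1}-wU_{m-2-\epsilon_{m-1},1}$ in $C3$ (and the single preceding edge in $C2$) is exactly the length of the run of factors $(1+y^{r_1})$ that the expansion makes available for that anchoring.

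I expect step (1), together with the edge cases of (2)--(3) when $r_1=1$ or $r_2=1$, to be the main obstacle. In the commutative Lee--Schiffler setting the weights are ordinary monomials that multiply freely, so the analogue of (1) reduces to matching exponent vectors; here one must keep track of the full left-to-right order of $\prod_i\beta_{[i]}$ and show that refining a single macro-edge and splicing in the new factors leaves the global telescoping intact — a genuinely noncommutative bookkeeping that works only because $F_{r_1}(q)=q$. When $r_1$ or $r_2$ equals $1$, the quantities $\epsilon_k,\delta_k$ become positive, Lemma~\ref{Dyck_rec} deletes an initial subpath from each block, and the expansion now offers an unforced binary choice rather than a forced run; one has to check separately that conditions $C2$ and $C3$, read with the shifted indices, single out precisely these choices. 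Once all of this is in place, summing over $\beta$ and over the subset-choices gives $x_{n-1}=\sum_{\beta'\in\cF(D_n)}q\prod_j\beta'_{[j]}$, completing the induction; and since every term $q\prod_j\beta'_{[j]}$ is a noncommutative Laurent monomial with coefficient $+1$, Theorem~\ref{main} refines — and at once implies — Theorem~\ref{kont_conj}.
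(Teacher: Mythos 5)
Your overall frame --- induct on $n$, write $x_{n-1}=F_{r_1}(x'_{n-2})$ with swapped parameters, use $F_{r_1}(q)=q$, relate the two Dyck paths via Lemma~\ref{Dyck_rec}, and expand the $(1+y^{r_1})$ factors hook by hook --- does match the ingredients the paper uses in Lemma~\ref{z_rec}. But your step (3), the assertion that a \emph{single} application of $F_{r_1}$ induces a term-by-term, weight-preserving bijection between the expanded terms of the $\cF(D'_{n-1})$-sum and the terms of the $\cF(D_n)$-sum, with $C1$--$C3$ simply reappearing with a shifted index $m$, is a genuine gap: it asserts exactly the hard content of the theorem, and it is not how the combinatorics behaves. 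Two concrete problems. First, the inverse factors do not cancel ``internally'': the colored weights contain $y^{-1}$, and $F_{r}(y^{-1})=x(1+y^{r})^{-1}$ leaves a factor $(1+y^{r})^{-1}$ sandwiched between powers of $x$ (e.g.\ $y^{-1}x^{-1}\mapsto x(1+y^{r})^{-1}xy^{-1}x^{-1}$), so it cannot be removed within one term; the cancellation only happens after regrouping the sum, pairing the colored hook with the full hook of uncolored edges --- and even after that regrouping, what one obtains is \emph{not} the full admissible sum on the new path. The paper's Lemma~\ref{z_rec} shows that applying $F$ to the sum subject only to $C1$--$C2$ misses precisely those collections whose $C3$-violations occur at the new smallest scale $m=1$: new $C3$ constraints are \emph{born} at every step rather than inherited by an index shift. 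Second, your identification of the count $U_{m-1,1}-wU_{m-2-\epsilon_{m-1},1}$ in $C3$ with ``the length of the run of $(1+y^{r_1})$ factors made available by expanding one macro-edge'' cannot be right for $m\ge2$: those preceding non-diagonal edges of $\nu_i$ sweep across many hooks of $D_n$, i.e.\ across the images of many edges of $D'_{n-1}$, not across a single expanded factor.

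This is why the paper (following Lee--Schiffler) does not attempt a one-step bijection. It drops $C3$, defines $z_{n-1}$ as the $C1$--$C2$ sum and the filtration $\cT^{\ge u}(D_n)$ of $C3$-violating collections, and then tracks corrections across \emph{all} previous steps: Lemma~\ref{z_rec} says one application of $F$ creates the scale-$1$ violations; Lemma~\ref{F_rec} says each further application promotes violations one level up the filtration, and its proof is not bijective but proceeds by evaluating the violating sums in closed form as $q\left(q^{-1}x_{n-2}\right)^{w-1}\left(q^{-1}x_{n-3}\right)^{r_{n-1}-1}\left(q^{-1}x_{n-4}\right)^{w-1}q^{-1}$, which requires the theorem at earlier stages (a simultaneous induction with Lemma~\ref{x_comp}) together with the weight identities of Lemma~\ref{GREEN_rec} and its corollary that an anchored $(m,w)$-Dyck suffix configuration has weight $q^{-1}$; Lemma~\ref{filt_rest} caps the filtration; and Lemma~\ref{x_comp} assembles $x_{n-1}$ as $z_{n-1}$ minus the images, under up to $n-5$ further applications of $F$, of the corrections created at every earlier stage. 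To rescue your plan you would either have to prove your one-step correspondence including the newly-born small-scale conditions --- which in effect means re-deriving this machinery --- or restructure the induction along these lines. A smaller point: the genuinely separate cases are not small $n$ or merely $r_1=1$, but small $r_1r_2$; the Dyck-path argument is run under $r_1r_2\ge5$, with $r_1r_2\le4$ handled by other means, while $\epsilon_k>0$ (one parameter equal to $1$ with $r_1r_2\ge5$) is treated inside the main argument via Lemma~\ref{Dyck_rec}.
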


\begin{example}
 \begin{tabular}{cl}
  \raisebox{.2cm}{For $r_1=2$, $r_2=3$, $n=5$ we have $U_{2,1}=5$, $U_{1,2}=2$ and so $R_5$ and $D_5$ are given by:} &
  \begin{tikzpicture}
   \draw[step=0.25cm,color=gray] (0,0) grid (0.75,0.5);
   \draw[color=gray] (0,0) -- (0.75,0.5);
   \draw[fill=black] (0,0) circle (1.1pt);
   \draw[fill=black] (0.25,0) circle (1.1pt);
   \draw[fill=black] (0.5,0) circle (1.1pt);
   \draw[fill=black] (0.5,0.25) circle (1.1pt);
   \draw[fill=black] (0.75,0.25) circle (1.1pt);
   \draw[fill=black] (0.75,0.5) circle (1.1pt);
  \end{tikzpicture}.\\
 \end{tabular}
 \noindent We have the following expression for $x_4$:
 \begin{align*}
  x_4=&qxy^{-1}xy^{-1}x^{-1}+qxy^{-1}x^{-1}(1+y^2)x^{-1}(1+y^2)y^{-1}x^{-1}+q(1+y^2)x^{-1}(1+y^2)x^{-1}y^{-1}xy^{-1}x^{-1}+\\
  &+q(1+y^2)x^{-1}(1+y^2)x^{-1}(1+y^2)y^{-1}x^{-1}(1+y^2)x^{-1}(1+y^2)y^{-1}x^{-1},
 \end{align*}
 where a factor of $1+y^2$ indicates an edge which may be either included in or excluded from the corresponding admissible collection of labeled/colored subpaths.  We present several examples for $r_1r_2=5$, enumerating all admissible collections with their monomials, at http://pages.uoregon.edu/drupel/dyck\_examples.pdf.
\end{example}

\begin{proof}[Proof of Theorem~\ref{main}:]
We divide the proof into a series of lemmas.  First we make the following definitions.
\begin{definition}
 Define the set $\tilde{\cF}(D_n)$ of non-overlapping collections $\beta\subset\cP(D_n)$ subject to conditions $C1$ and $C2$.
 Define $\cT^{\ge u}(D_n)\subset\tilde{\cF}(D_n)$ to consist of those $\beta$ satisfying the following condition only for $m\ge u$:
 \begin{itemize}
  \item[$C3^{op}$:] there exists integers $i,k,w,m$ such that $\alpha(i,k)\in\beta$ is an $(m,w)$-Dyck suffix and none of the preceding $U_{m-1,1}-wU_{m-2-\epsilon_{m-1},1}$ non-diagonal edges of $\nu_i$ are supported on $\beta$.
 \end{itemize}
\end{definition}
\begin{lemma}\label{filt_rest}
 If $m\ge n-3$, there do not exist $i,w$ $(1\le w< r_{m-\epsilon_{m-1}}-\delta_m)$ so that $\min\{t:i<t\le U_{n-4,2}, s_{i,t}>s\}$ is of the form $i+U_{m,2}-wU_{m-1-\epsilon_{m-1},2}$.  In particular, for any $n\ge2$, the set $\cT^{\ge n-3}(D_n)$ is empty.
\end{lemma}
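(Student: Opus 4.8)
The plan is to deduce the \emph{in particular} from the first assertion and then to prove the first assertion by induction on $n$ using the recursive description of $D_n$ in Lemma~\ref{Dyck_rec}. For the reduction: if a collection $\beta\in\tilde{\cF}(D_n)$ contains an $(m,w)$-Dyck suffix $\alpha(i,k)$, then by Definition~\ref{colors} the index $\min\{t:i<t\le k,\,s_{it}>s_n\}$ equals $i+U_{m,2}-wU_{m-1-\epsilon_{m-1},2}$ with $1\le w<r_{m-\epsilon_{m-1}}-\delta_m$, and since this minimum is attained at some $t\le k\le U_{n-4,2}$ it coincides with $\min\{t:i<t\le U_{n-4,2},\,s_{i,t}>s\}$. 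Hence the first assertion forbids $m\ge n-3$: no $\beta$ witnesses condition $C3^{op}$ with such an $m$, so $\cT^{\ge n-3}(D_n)$ is empty. (For $n\le 4$ there are no Dyck suffixes in $D_n$ at all, so both statements hold vacuously.)

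For the first assertion, fix $m\ge n-3$ and suppose $\min\{t:i<t\le U_{n-4,2},\,s_{i,t}>s\}=i+U_{m,2}-wU_{m-1-\epsilon_{m-1},2}$ for some $i\ge 0$ and $1\le w<r_{m-\epsilon_{m-1}}-\delta_m$. Since $D_n$ is a Dyck path, every subpath beginning at $\nu_0=(0,0)$ has slope at most $s$, so the set in the minimum is empty when $i=0$; thus $i\ge 1$, the minimum $t^{*}$ obeys $i<t^{*}\le U_{n-4,2}$, and $U_{m,2}-wU_{m-1-\epsilon_{m-1},2}=t^{*}-i\le U_{n-4,2}-1$. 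So it is enough to prove the arithmetic bound
\begin{equation*}
 U_{m,2}-wU_{m-1-\epsilon_{m-1},2}\ \ge\ U_{n-4,2}\qquad\bigl(m\ge n-3,\ \ 1\le w<r_{m-\epsilon_{m-1}}-\delta_m\bigr).
\end{equation*}
The left-hand side decreases in $w$, so one may assume $w=r_{m-\epsilon_{m-1}}-\delta_m-1$, and it is monotone in $m$ over $m\ge n-3$, so one may assume $m=n-3$. By Lemma~\ref{Dyck_rec} the upstep-counts obey a three-term recursion $U_{m,2}=(r_{m-\epsilon_{m-1}}-\delta_m+2)\,U_{m-1-\epsilon_{m-1},2}-U_{m-1-\delta_m,2}$ once $m$ is large enough for that lemma to apply; substituting it collapses the extreme value to $3\,U_{m-1-\epsilon_{m-1},2}-U_{m-1-\delta_m,2}$, and one finishes by comparing this with the value of $U_{n-4,2}$ given by one more step of the recursion. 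The same conclusion is visible structurally: by Lemma~\ref{Dyck_rec} the first index at which a subpath from $\nu_i$ overtakes the main diagonal is detected either at the top level or, recursively, inside the largest proper sub-block $D_{n-1-\epsilon_{n-1}}$, so the level $m$ appearing in Definition~\ref{colors} is always at most $n-4$.

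The reductions in the previous paragraph are routine; the content, and where I expect the main obstacle, is the arithmetic bound and the case analysis it forces. One has to run it through the possible values $\epsilon_{m-1},\epsilon_m\in\{0,1\}$, keep it valid in the degenerate parameter ranges where some $r_i$ is small (so that $\epsilon_k=1$ and $\delta_k$ can be as large as $3$), treat the second- and fourth-quadrant variants of the model on the same footing, and set up the exact inductive hypothesis needed from the smaller Dyck path $D_{n-1-\epsilon_{n-1}}$, together with the finitely many small-$n$ base cases checked by hand. None of this is deep, but collectively it is the bulk of the argument.
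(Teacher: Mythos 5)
Your setup is sound: the ``in particular'' statement does follow from the first assertion exactly as you argue, the case $i=0$ is correctly excluded by the slope condition defining Dyck paths, and the whole lemma indeed reduces to the inequality $U_{m,2}-wU_{m-1-\epsilon_{m-1},2}\ge U_{n-4,2}$ for $m\ge n-3$, $1\le w<r_{m-\epsilon_{m-1}}-\delta_m$, after which $\tau=i+U_{m,2}-wU_{m-1-\epsilon_{m-1},2}>U_{n-4,2}$ contradicts $\tau\le U_{n-4,2}$. The problem is that this inequality is the entire content of the lemma, and you do not prove it: you defer it explicitly (``where I expect the main obstacle''), so the proposal is incomplete at precisely the step that matters.

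Moreover, the reductions you label routine are not all safe. Decreasing in $w$ is fine, but the claimed monotonicity in $m$ of $U_{m,2}-(r_{m-\epsilon_{m-1}}-\delta_m-1)U_{m-1-\epsilon_{m-1},2}$, which you use to reduce to $m=n-3$, is neither obvious nor justified; note that in the degenerate range the sequences $U_{j,2}$ themselves are not monotone (for $r_1=1$, $r_2=5$ one gets $U_{j,2}=1,1,4,3,11,\ldots$), so any monotonicity appeal here needs a genuine argument, and your final step ``compare with one more step of the recursion'' is exactly the unresolved case analysis. The paper's proof needs none of this: taking $\epsilon_{m-1}=0$ (the other case reducing to it), the constraint $w\le r_m-\delta_m-1$ together with the defining Chebyshev recursion gives directly $U_{m,2}-wU_{m-1,2}\ge(2+\epsilon_m)U_{m-1,2}-U_{m-2,2}\ge U_{m-k,2}$ for every $k\ge1$, which yields the bound uniformly in $m\ge n-3$ in one line, with no induction on $n$, no use of Lemma~\ref{Dyck_rec}, and no special treatment of $m=n-3$. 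So the fix is not bookkeeping around your sketch; you need to actually establish the inequality, and the direct recursion estimate is both shorter and avoids the questionable monotonicity-in-$m$ reduction.
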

\begin{proof}
 We assume $\epsilon_{m-1}=0$; the case $\epsilon_{m-1}>0$ follows from this one.  Since $w<r_m-1-\epsilon_m$, we have 
 \[U_{m,2}-wU_{m-1,2}\ge U_{m,2}-r_mU_{m-1,2}+(2+\epsilon_m)U_{m-1,2}=(2+\epsilon_m)U_{m-1,2}-U_{m-2,2}\ge U_{m-k,2}, \text{ for $k\ge1$.}\] 
 Now if $m\ge n-3$ and $\tau:=\min\{t:i<t\le U_{n-4,2}, s_{i,t}>s\}=i+U_{m,2}-wU_{m-1,2}$, then $\tau\ge i+U_{n-4,2}$.  But this contradicts $\nu_{U_{n-4,2}}$ being the highest labeled vertex in $D_n$.
\end{proof}

\noindent Let $z_0=x_0=x$ and for $n\ge2$ write $z_{n-1}=\sum\limits_{\beta\in\tilde{\cF}(D_n)} q\prod\limits_{i=1}^{U_n} \beta_{[i]}$.  
For each integer $\ell$ we will use a parenthesized exponent ${}^{(\ell)}$ to denote a quantity with each $r_k$ replaced by $r_{k+\ell}$.  

\begin{lemma}\label{z_rec}
 Suppose $n\ge2$. Then $z^{(1)}_n=F_{r_2}(z_{n-1})+\sum\limits_{\beta\in\cT^{\ge1}(D^{(1)}_{n+1})\setminus\cT^{\ge2}(D^{(1)}_{n+1})} q\prod\limits_{i=1}^{U^{(1)}_{n+1}} \beta_{[i]}$.
\end{lemma}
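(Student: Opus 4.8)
The plan is to expand $F_{r_2}(z_{n-1})$ term by term and reorganise the result into the monomials indexing $z_n^{(1)}$. The starting point is that $F_{r_2}$ is a $k$-algebra automorphism of $K$ which fixes $q$: since $y$ commutes with $1+y^{r_2}$,
\[
  F_{r_2}(q)=(xyx^{-1})\bigl((1+y^{r_2})x^{-1}\bigr)(xy^{-1}x^{-1})\bigl(x(1+y^{r_2})^{-1}\bigr)=xyx^{-1}(1+y^{r_2})y^{-1}(1+y^{r_2})^{-1}=q.
\]
Hence $F_{r_2}(z_{n-1})=\sum_{\gamma\in\tilde{\cF}(D_n)}q\prod_{i=1}^{U_n}F_{r_2}(\gamma_{[i]})$. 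Since $z_n^{(1)}=\sum_{\beta\in\tilde{\cF}(D_{n+1}^{(1)})}q\prod_i\beta_{[i]}$ and, using $\cT^{\ge2}\subset\cT^{\ge1}\subset\tilde{\cF}$, one has $\tilde{\cF}(D_{n+1}^{(1)})\setminus\bigl(\cT^{\ge1}(D_{n+1}^{(1)})\setminus\cT^{\ge2}(D_{n+1}^{(1)})\bigr)=\cF(D_{n+1}^{(1)})\cup\cT^{\ge2}(D_{n+1}^{(1)})$, the lemma is equivalent to
\[
  \sum_{\gamma\in\tilde{\cF}(D_n)}q\prod_{i=1}^{U_n}F_{r_2}(\gamma_{[i]})\;=\;\sum_{\beta\in\cF(D_{n+1}^{(1)})\cup\cT^{\ge2}(D_{n+1}^{(1)})}q\prod_{i=1}^{U_{n+1}^{(1)}}\beta_{[i]}.
\]

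The next step is to carry out the expansion, using $F_{r_2}(x)=xyx^{-1}$, $F_{r_2}(x^{-1})=xy^{-1}x^{-1}$, $F_{r_2}(y)=(1+y^{r_2})x^{-1}$ and $F_{r_2}(y^{-1})=x(1+y^{r_2})^{-1}$. The $y^{-1}$'s occurring in the weights $xy^{-1}x^{-1}$ and $x^hy^{-1}x^{-1}$ produce factors $(1+y^{r_2})^{-1}$, but in the full product $q\prod_i F_{r_2}(\gamma_{[i]})$ these cancel against $(1+y^{r_2})$'s contributed by neighbouring edges, so that the product simplifies to a word in the letters $x^{\pm1}$ and $1+y^{r_2}$ only; distributing the surviving factors $1+y^{r_2}$ then writes it as a sum of Laurent monomials, one for each choice of a subset of those factors. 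Reading off exponents and comparing with Lemma~\ref{Dyck_rec} after the shift $r_k^{(1)}=r_{k+1}$, $U_{k,j}^{(1)}=U_{k,j+1}$, one checks that the block of new letters attached to the $i$-th edge of $D_n$ is exactly the block of edges the one-step Dyck recursion glues into $D_{n+1}^{(1)}$ in place of that edge — an unsupported East step unfolds into a hook, the capping North step carries the $(1+y^{r_2})^{-1}$ that cancels, horizontal edges are inherited — and that the chosen subset of factors then corresponds exactly to a non-overlapping collection $\beta$ on $D_{n+1}^{(1)}$ with the prescribed weights $\beta_{[j]}$; in particular this is where the exponent $x^h$ in the weight of the last edge of a type-$h$ hook comes from. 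Thus $q\prod_i F_{r_2}(\gamma_{[i]})=\sum_{\beta\in\cG_\gamma}q\prod_j\beta_{[j]}$ for a subfamily $\cG_\gamma\subset\tilde{\cF}(D_{n+1}^{(1)})$, and distinct $\gamma$ give disjoint $\cG_\gamma$ (separated by the collection on $D_n$ to which they contract).

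It then remains to show $\cG:=\bigcup_\gamma\cG_\gamma=\cF(D_{n+1}^{(1)})\cup\cT^{\ge2}(D_{n+1}^{(1)})$. Conditions $C1$ and $C2$ for $\beta\in\cG_\gamma$ follow from $C1,C2$ for $\gamma$ together with the rigid local shape of a glued block. For $C3$ one uses Definition~\ref{colors}: the one-step Dyck recursion places the $(m,w)$-Dyck suffixes of $D_{n+1}^{(1)}$ with $m\ge2$ in slope-preserving bijection with the $(m-1,w)$-Dyck suffixes of $D_n$ (shifting the range $1\le m\le n-3$ down onto $1\le m\le n-4$ and leaving $m=1$ as a genuinely new level), and under this bijection the level-$m$ instance of $C3$ for $\beta$ becomes the level-$(m-1)$ instance of $C3$ for $\gamma$; since $\gamma$ ranges over all of $\tilde{\cF}(D_n)$, on which no part of $C3$ is imposed, the collections $\beta$ realise every admissible pattern of $C3$-behaviour at levels $\ge2$. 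The level-$1$ Dyck suffixes of $D_{n+1}^{(1)}$ are new, but they arise from short suffixes of $D_n$, and for them the level-$1$ instance of $C3$ — as $U_{0,1}=1$ and $U_{-1,1}=0$ — is literally the requirement imposed by $C2$ on a short suffix one level down; since $\gamma$ satisfies $C2$, every $\beta\in\cG$ satisfies $C3$ at level $1$, so $\cG$ meets $\cT^{\ge1}(D_{n+1}^{(1)})\setminus\cT^{\ge2}(D_{n+1}^{(1)})$ trivially, and a matching argument then shows everything outside $\cT^{\ge1}\setminus\cT^{\ge2}$ is realised. With the displayed identity this proves the lemma. The small cases $n=2,3,4$, where $R_n$ may lie outside the first quadrant and Lemma~\ref{Dyck_rec} does not apply, and the cases with some $\epsilon_k>0$ (i.e.\ $\min\{r_1,r_2\}=1$), are checked directly from the explicit expansions.

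The main obstacle is the bookkeeping of the middle and last steps: one must verify, edge by edge and monomial by monomial, that $F_{r_2}$ converts a weighted non-overlapping collection on $D_n$ into a sum of correctly weighted and correctly coloured non-overlapping collections on $D_{n+1}^{(1)}$, that the $(1+y^{r_2})^{\pm1}$ factors cancel in precisely the right places, and that the colour analysis of Definition~\ref{colors} forces exactly the split between $\cF$ and $\cT^{\ge2}$. The matching of short suffixes of $D_n$ with the level-$1$ Dyck suffixes of $D_{n+1}^{(1)}$, and of the higher Dyck suffixes with their one-level-down counterparts, is the delicate heart of the argument.
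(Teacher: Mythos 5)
Your overall strategy is the same as the paper's: use $F_{r_2}(q)=q$, expand $F_{r_2}(z_{n-1})$ termwise, study how the $(1+y^{r_2})^{-1}$ factors cancel, and recognize the correction term $\sum_{\beta\in\cT^{\ge1}\setminus\cT^{\ge2}}$ as accounting for exactly those collections of $D^{(1)}_{n+1}$ whose only $C3$ failure is at level $1$ (your matching of short suffixes of $D_n$, via $C2$, with $(1,w)$-Dyck suffixes of $D^{(1)}_{n+1}$ is a reasonable elaboration of the paper's closing remark about the included vertical edge). However, there is a genuine gap at the central step. You claim that for each single $\gamma\in\tilde{\cF}(D_n)$ the element $q\prod_iF_{r_2}(\gamma_{[i]})$ is itself a sum $\sum_{\beta\in\cG_\gamma}q\prod_j\beta_{[j]}$ over a subfamily $\cG_\gamma\subset\tilde{\cF}(D^{(1)}_{n+1})$, with distinct $\gamma$ giving disjoint families. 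This is false: take $\gamma$ to be the collection in which every edge of $D_n$ is included as a single uncolored edge (it lies in $\tilde{\cF}(D_n)$, since $C1$ holds and $C2$ is vacuous). Each included vertical edge contributes $F_{r_2}(y^{-1}x^{-1})=x(1+y^{r_2})^{-1}xy^{-1}x^{-1}$, while its neighbours contribute only factors $xy^{-1}x^{-1}$, which supply no $(1+y^{r_2})$; the $(1+y^{r_2})^{-1}$ survives, so $q\prod_iF_{r_2}(\gamma_{[i]})$ is not even a Laurent polynomial and cannot equal a finite sum of weights $q\prod_j\beta_{[j]}$, which are sums of Laurent monomials. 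So the cancellation you invoke is not termwise in $\gamma$.

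The missing idea is precisely the paper's first observation: the cancellation only happens after regrouping \emph{different} collections, namely pairing a colored hook with the corresponding fully included uncolored hook. Indeed, on $D_n$ these two configurations have weights $x^hy^{-1}x^{-1}$ and $x^{h-r_2}y^{-1}x^{-1}$, and
\begin{equation*}
 F_{r_2}\left(x^hy^{-1}x^{-1}\right)+F_{r_2}\left(x^{h-r_2}y^{-1}x^{-1}\right)
 =x\left(y^h+y^{h-r_2}\right)(1+y^{r_2})^{-1}xy^{-1}x^{-1}
 =xy^{h-r_2}xy^{-1}x^{-1},
\end{equation*}
a single Laurent monomial, whereas neither summand alone simplifies. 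Consequently the image of $F_{r_2}(z_{n-1})$ must be organized by summing over such pairs (and over the free hooks coming from missing edges) before any edge-by-edge comparison with collections on $D^{(1)}_{n+1}$ can be made; there is no well-defined disjoint family $\cG_\gamma$ attached to an individual $\gamma$, and with it the "distinct $\gamma$ give disjoint $\cG_\gamma$" bookkeeping, and hence your identification of the image with $\cF(D^{(1)}_{n+1})\cup\cT^{\ge2}(D^{(1)}_{n+1})$, loses its foundation. Your later color/level-shift analysis can likely be salvaged on top of the regrouped expansion, but as written the proof does not go through without this regrouping step.
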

\begin{proof}
 This follows from a study of how the $(1+y^{r_2})^{-1}$ terms cancel in $F_{r_2}(z_{n-1})$.  In particular, we make the following observations.  The sum of the weights of a colored hook and the corresponding full hook of uncolored edges gives rise to a Laurent monomial under $F_{r_2}$.  An edge $\alpha$ in the support of $\beta$ gives rise to a colored hook of type 1, 2, or 3 corresponding to the edge $\alpha$ being horizontal, vertical not followed by a diagonal, or vertical followed by a diagonal, respectively.  A missing edge $\alpha$ gives rise to all collections of uncolored edges in a hook of type 1, 2, or 3 corresponding to the edge $\alpha$ being horizontal, vertical not followed by a diagonal, or vertical followed by a diagonal, respectively.  

 Now consider an uncolored hook with a missing horizontal edge, followed by $d$ included horizontal edges, and then an included vertical edge.  Under $F_{r_2}$ the weight of this configuration gives rise to the weights of all collections of horizontal edges in a hook of type 1 with an included vertical edge followed by $d$ colored hooks of type 1 and then a colored hook of type 2.  The sum is accounting for the included vertical edge in this case.
\end{proof}
\noindent In the following Lemma we consider a $D_3$ with its first $D_2$ removed as a single vertical edge and for $\epsilon_3=1$ we consider a $D_4$ with its first $D_2$ removed as a vertical edge followed by a $(-1,1)-$diagonal edge.
\begin{lemma}\label{GREEN_rec}\mbox{}
 \begin{enumerate}
  \item\label{step1} Suppose $k-\epsilon_{k-1}\ge5$.  Then the weight of a missing $D_{k-2}$ with its first $D_{k-3-\epsilon_{k-3}}$ removed followed by a colored $D_k$ simplifies to the weight of a colored $D_{k-1-\epsilon_{k-1}}$.
  \item\label{step2} Suppose $k-\epsilon_{k-1}\ge5$.  Then the weight of a missing $D_{k-2}$ followed by a colored $D_{k-1-\epsilon_{k-1}}$ simplifies to the weight of a missing $D_{k-2}$ with its first $D_{k-3-\epsilon_{k-3}}$ removed.
  \item\label{step3} Suppose $m-\delta_m\ge0$.  Then for $1\le w< r_{m-\epsilon_{m-1}}-\delta_m$, the weight of an $(m,w)$-Dyck suffix preceded by $U_{m-1,1}-wU_{m-2-\epsilon_{m-1},1}$ missing non-diagonal edges is equal to the weight of an $(m,w+1)$-Dyck suffix preceded by $U_{m-1,1}-(w+1)U_{m-2-\epsilon_{m-1},1}$ missing non-diagonal edges.
 \end{enumerate}
\end{lemma}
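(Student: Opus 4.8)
The plan is to prove the three statements by induction, reading ``weight of a configuration'' as the generating function obtained by summing the ordered product $\prod\beta_{[i]}$ over all non-overlapping sub-collections of $\cP(D_n)$ compatible with the prescribed inclusions and exclusions, and reading ``simplifies to'' as equality of the two resulting elements of the skew-field $K$, an equality forced by cancellation rather than by a term-by-term bijection. As the statement signals, I would first normalize to $\epsilon_{k-1}=0$ in \eqref{step1}--\eqref{step2} and $\epsilon_{m-1}=0$ in \eqref{step3}: when $\epsilon=1$ the relevant truncated Dyck path acquires an extra leading vertical edge (or, in the $D_4$-with-first-$D_2$-removed case when $\epsilon_3=1$, a vertical edge followed by a $(-1,1)$-diagonal edge), whose weight is common to both sides of the identity in question and can be factored off, so the general case reduces to the normalized one. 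I would prove \eqref{step1} and \eqref{step2} first, by simultaneous induction on $k$, and then deduce \eqref{step3}.

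For the base cases of \eqref{step1} and \eqref{step2} I would use the conventions fixed just before the statement, under which the smallest truncated paths are a single vertical edge (a $D_3$ with its first $D_2$ removed) and a vertical-then-diagonal pair (a $D_4$ with its first $D_2$ removed, $\epsilon_3=1$). In each minimal instance one writes $D_k$ and $D_{k-1-\epsilon_{k-1}}$ out via Lemma~\ref{Dyck_rec}, expands both weights using the cases in the definition of $\beta_{[i]}$, and verifies that the telescoping of the $x^{\pm1}$ and $y^{\pm1}$ factors at the junction collapses the longer side onto the shorter, the surplus $(1+y^{r_2})^{\pm1}$ produced by a missing horizontal run versus the corresponding colored hook pairing off precisely as in the proof of Lemma~\ref{z_rec}.

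For the inductive step I would expand every Dyck path in sight with Lemma~\ref{Dyck_rec}: $D_k$ becomes $r_{k-\epsilon_{k-1}}-\delta_k+1$ copies of $D_{k-1-\epsilon_{k-1}}$ followed by a truncated copy, and one level down, a $D_{k-2}$ with its first $D_{k-3-\epsilon_{k-3}}$ removed becomes a string of copies of $D_{k-3-\epsilon_{k-3}}$ ending in a truncated one, and similarly for $D_{k-1-\epsilon_{k-1}}$. Since the local weight of an edge depends only on its type (horizontal, or last edge of a hook of type $h$), the weight of a colored Dyck path is the ordered product of the weights of its constituent colored blocks; so in \eqref{step1} I would pair the first missing $D_{k-3-\epsilon_{k-3}}$-block against the first colored $D_{k-1-\epsilon_{k-1}}$-block, invoke the inductive hypothesis \eqref{step1} at parameter $k-1-\epsilon_{k-1}$ to replace that pair by a colored $D_{k-2-\epsilon_{k-2}}$-block, and iterate until the whole missing truncated $D_{k-2}$ is absorbed and only a colored $D_{k-1-\epsilon_{k-1}}$ remains, the matching of block counts on the two sides being an instance of the Chebyshev recursion $U_{k+1,j+1}=r_jU_{k,j}-U_{k-1,j-1}$ applied to the up-step and horizontal counts of the blocks. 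Statement \eqref{step2} goes the same way, using \eqref{step2} one level down to push colored blocks into the missing region, shortening it by one leading $D_{k-3-\epsilon_{k-3}}$ at each step. Finally, \eqref{step3} follows formally once \eqref{step1}--\eqref{step2} are in hand: by Definition~\ref{colors}, passing from an $(m,w)$-Dyck suffix to an $(m,w+1)$-Dyck suffix shifts the index at which $s_n$ is first exceeded by $U_{m-1-\epsilon_{m-1},2}$ up-steps and shortens the prescribed missing prefix by $U_{m-2-\epsilon_{m-1},1}$ non-diagonal edges, i.e. by exactly one Dyck-path block, so the asserted equality is a single application of \eqref{step2} at the appropriate level to that block together with the adjacent colored piece; the hypothesis $m-\delta_m\ge0$ is what guarantees that the smaller Dyck paths needed here actually exist.

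The main obstacle I anticipate is the non-commutative bookkeeping in the inductive step for \eqref{step1}: because the weights live in a skew-field, ``simplifies to'' cannot be checked factor-by-factor, and one must verify that the $x^{\pm1}$ and $y^{\pm1}$ contributed at each seam between a missing block and a colored block conjugate through and cancel to leave exactly the next weight, with the $(1+y^{r_2})^{\pm1}$ factors pairing off as in Lemma~\ref{z_rec}. Lesser points needing care are the hook-type normalization (types $1,2,3$ according to horizontal displacement $r_2-1,r_2-2,r_2-3$, where the standing hypothesis $r_1r_2\ge5$ is used) and the degenerate truncated paths that seed the induction at small parameters.
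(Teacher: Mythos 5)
Your proposal takes essentially the same route as the paper: the paper's own proof is exactly the remark that parts \eqref{step1} and \eqref{step2} follow by a simultaneous induction whose inductive step uses the block decomposition of Lemma~\ref{Dyck_rec}, and that part \eqref{step3} then follows from \eqref{step1}, \eqref{step2} and Lemma~\ref{Dyck_rec}. Your fleshed-out account of the base cases, the block-by-block absorption in the induction, and the deduction of \eqref{step3} is a consistent elaboration of that sketch rather than a different argument.
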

\begin{proof}
 Parts~\eqref{step1} and \eqref{step2} follow from a simultaneous induction using Lemma~\ref{Dyck_rec} in the induction step.  Part~\eqref{step3} follows from \eqref{step1}, \eqref{step2}, and Lemma~\ref{Dyck_rec}.
\end{proof}

\begin{corollary}
 Suppose $m-\delta_m\ge0$.  Then for $1\le w< r_{m-\epsilon_{m-1}}-\delta_m$, the weight of an $(m,w)$-Dyck suffix preceded by $U_{m-1,1}-wU_{m-2-\epsilon_{m-1},1}$ missing non-diagonal edges is equal to $q^{-1}$.
\end{corollary}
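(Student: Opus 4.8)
The plan is to strip off the $w$-dependence using Lemma~\ref{GREEN_rec}\eqref{step3}, and then pin down the value $q^{-1}$ by an induction on $m$ powered by the Dyck-path recursion (Lemma~\ref{Dyck_rec}) together with parts \eqref{step1} and \eqref{step2} of Lemma~\ref{GREEN_rec}. Indeed, Lemma~\ref{GREEN_rec}\eqref{step3} asserts precisely that the quantity in question is unchanged when $w$ is replaced by $w+1$ and the count of preceding missing non-diagonal edges is adjusted to $U_{m-1,1}-(w+1)U_{m-2-\epsilon_{m-1},1}$, throughout the range $1\le w<r_{m-\epsilon_{m-1}}-\delta_m$; so there is nothing to prove unless $r_{m-\epsilon_{m-1}}-\delta_m\ge2$, and when there is, the weight depends only on $m$. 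It therefore suffices to evaluate it for one convenient value of $w$, and I would take $w$ maximal, $w=r_{m-\epsilon_{m-1}}-\delta_m-1$, for which the preceding block of missing edges is shortest.

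Now induct on $m$ (equivalently on $m-\delta_m$), using the parenthesized-exponent convention ${}^{(\ell)}$ to absorb the parity shift incurred when the size index drops by $1+\epsilon_{m-1}$. In the inductive step, when $m-\delta_m\ge4$, I would apply Lemma~\ref{Dyck_rec} to identify the support of the $(m,w)$-Dyck suffix with one or two copies of a $D_{m-1-\epsilon_{m-1}}$ with its initial $D_{m-1-\delta_m}$ removed, sitting just after the prescribed missing edges; reading that block of missing edges as a missing $D_{k-2}$ --- respectively a missing $D_{k-2}$ with its first $D_{k-3-\epsilon_{k-3}}$ removed --- for the matching $k$, parts \eqref{step1} and \eqref{step2} of Lemma~\ref{GREEN_rec} collapse the block to the same weight with $m$ lowered, whence the inductive hypothesis gives $q^{-1}$. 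The base cases are $0\le m-\delta_m\le3$, below the range of Lemma~\ref{Dyck_rec}; there the Dyck suffix is an explicit short path, a single hook of type $1$, $2$, or $3$, possibly terminated by a $(-1,1)$-diagonal, in the normalization fixed just before Lemma~\ref{GREEN_rec} under which a truncated $D_3$ (resp.\ $D_4$) counts as one vertical edge (resp.\ a vertical edge followed by a diagonal). There I would multiply out $\prod_i \beta_{[i]}$ over the missing edges followed by the hook, directly from the weighting rules, and check that the result telescopes to $q^{-1}=yxy^{-1}x^{-1}$.

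I expect the main obstacle to be the index bookkeeping rather than any new idea. One must (i) convert ``an $(m,w)$-Dyck suffix preceded by $U_{m-1,1}-wU_{m-2-\epsilon_{m-1},1}$ missing non-diagonal edges'' into a single contiguous string of edge-weights $\beta_{[i]}$ and match its two ends exactly with the ``missing $D$''/``colored $D$'' building blocks of Lemma~\ref{GREEN_rec}; (ii) track the $\epsilon_k=1$ corrections --- which arise precisely when some $r_j=1$ --- consistently through all the shifted indices, the $D_3$/$D_4$ normalization being what keeps the base cases uniform; and (iii) in the base case confirm the non-commutative product comes out to $q^{-1}$ in the correct left-to-right order, not to $q^{-1}$ times a leftover power of $x$ or $y$. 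Getting the shifted indices to line up uniformly across the whole range of $m$ is where the care lies.
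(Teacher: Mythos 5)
Your overall strategy (use Lemma~\ref{GREEN_rec}.\ref{step3} to remove the $w$-dependence, then induct on $m$ with a hand check in the base case) matches the paper's, but the inductive descent in $m$ --- the actual content of the proof --- is not set up correctly, and the idea that makes it work is missing. The paper's induction does not stop at $w=r_{m-\epsilon_{m-1}}-\delta_m-1$: it applies part~\ref{step3} once more, landing on the configuration with $w=r_{m-\epsilon_{m-1}}-\delta_m$, and the whole point is that \emph{this} extreme configuration coincides literally with the $(m-1,1)$ configuration. By Lemma~\ref{Dyck_rec} the hook sequence of an $(m,r_{m-\epsilon_{m-1}}-\delta_m)$-Dyck suffix equals that of an $(m-1,1)$-Dyck suffix, and the identity $U_{m-1,1}-(r_{m-\epsilon_{m-1}}-\delta_m)U_{m-2-\epsilon_{m-1},1}=U_{m-2,1}-U_{m-3-\epsilon_{m-2},1}$ shows the prescribed blocks of missing non-diagonal edges have the same length; so the weight at level $m$ is, edge for edge, the weight at level $m-1$ with $w=1$, and the induction closes using only part~\ref{step3}. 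By choosing $w$ maximal \emph{within} the corollary's range you forfeit exactly this coincidence, and neither the hook sequences nor the missing-edge counts match a lower-$m$ configuration at that value of $w$.

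The substitute descent you propose does not repair this. An $(m,w)$-Dyck suffix is not ``one or two copies of a $D_{m-1-\epsilon_{m-1}}$ with its initial $D_{m-1-\delta_m}$ removed'': as in the proof of Lemma~\ref{F_rec}, inside $D_n$ the $(n-4,w)$-Dyck suffix is what remains of $D_n$ after its first $w$ copies of $D_{n-1-\epsilon_{n-1}}$, i.e.\ $r_{n-\epsilon_{n-1}}-\delta_n+1-w$ \emph{full} copies of the next-smaller maximal Dyck path followed by a single truncated copy; the building blocks sit four levels above the $m-1$ you wrote (your indices are pattern-matched from Lemma~\ref{Dyck_rec} applied to $D_m$ itself, not to the suffix), and for your chosen $w$ there are at least two full copies plus the truncated one. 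Moreover parts~\ref{step1} and~\ref{step2} of Lemma~\ref{GREEN_rec} pair a missing block with a colored full $D_k$ or $D_{k-1-\epsilon_{k-1}}$, which does not match the string ``missing block of length $U_{m-1,1}-wU_{m-2-\epsilon_{m-1},1}$ followed by this suffix'' without essentially re-deriving part~\ref{step3}; in the paper those two parts are consumed in proving part~\ref{step3}, and the corollary itself never needs them again. So the concrete gap is the absent identification of the top-$w$ configuration with the $(m-1,1)$ configuration (hook-sequence equality plus the displayed numerical identity); without it your induction on $m$ has no mechanism to decrease $m$.
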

\begin{proof}
 We work by induction, the case $m-\delta_m=0$ is easy to check by hand.  It follows from Lemma~\ref{Dyck_rec} that the hook sequences of an $(m,r_{m-\epsilon_{m-1}}-\delta_m)$-Dyck suffix and an $(m-1,1)$-Dyck suffix are the same.  Then one easily checks that $U_{m-1,1}-(r_{m-\epsilon_{m-1}}-\delta_m)U_{m-2-\epsilon_{m-1},1}=U_{m-2,1}-U_{m-3-\epsilon_{m-2},1}$,
 the case $\epsilon_{m-1}>0$ following from the case $\epsilon_{m-1}=0$.  The result now follows by induction using Lemma~\ref{GREEN_rec}.\ref{step3}.
\end{proof}

\noindent We remind that a parenthesized exponent ${}^{(\ell)}$ denotes a quantity with each $r_k$ replaced by $r_{k+\ell}$.  In particular, note that $F_{r_2}(x_k)=x^{(1)}_{k+1}$.
\begin{lemma}\label{F_rec}
 Let $u\ge1$ and $n\ge u+4$. Then
\begin{equation*}
 F_{r_2}\left(\sum\limits_{\beta\in\cT^{\ge u}(D_n)\setminus\cT^{\ge u+1}(D_n)} q\prod_{i=1}^{U_n} \beta_{[i]}\right)
 =\sum\limits_{\beta\in\cT^{\ge u+1}(D^{(1)}_{n+1})\setminus\cT^{\ge u+2}(D^{(1)}_{n+1})} q\prod_{i=1}^{U^{(1)}_{n+1}} \beta_{[i]}.
\end{equation*}
\end{lemma}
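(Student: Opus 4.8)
The plan is to mirror the $(1+y^{r_2})^{-1}$-cancellation analysis used in the proof of Lemma~\ref{z_rec}, but now carrying along the bookkeeping data of \emph{which} level $m$ the condition $C3^{op}$ fails at. Recall that $F_{r_2}$ sends $x\mapsto xyx^{-1}$ and $y\mapsto(1+y^{r_2})x^{-1}$, so each factor $x^{\pm1}$ appearing in a weight $\beta_{[i]}$ acquires a factor $(1+y^{r_2})^{\pm1}$, and these telescope against adjacent factors; as recorded in Lemma~\ref{z_rec}, the surviving contributions reorganize into colored and uncolored hooks, with each \emph{missing} uncolored edge $\alpha$ expanding into the full family of sub-collections of the hook it spawns (type $1$, $2$, or $3$ according to whether $\alpha$ is horizontal, vertical-not-before-diagonal, or vertical-before-diagonal). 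This is precisely the mechanism that turns $F_{r_2}(z_{n-1})$ into $z^{(1)}_n$ minus its top filtration piece, and the present lemma should be the piecewise refinement of that identity.

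First I would fix the combinatorial dictionary between $D_n$ (parameters $r_1,r_2$) and $D^{(1)}_{n+1}$ (parameters $r_2,r_1$). Iterating Lemma~\ref{Dyck_rec} expresses $D_n$ as a concatenation of copies of the $D_{k-1-\epsilon_{k-1}}$'s, and the same recursion applied with shifted indices expresses $D^{(1)}_{n+1}$ as the matching concatenation of copies of the $D^{(1)}_{k-\epsilon_{k-1}}$'s; the hook produced by an edge of $D_n$ lying in the $j$-th block $D_{k-1-\epsilon_{k-1}}$ then sits inside the $j$-th block $D^{(1)}_{k-\epsilon_{k-1}}$ of $D^{(1)}_{n+1}$. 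In particular an $(m,w)$-Dyck suffix of $D_n$ is carried to an $(m+1,w)$-Dyck suffix of $D^{(1)}_{n+1}$: the level is shifted up by one, exactly as the parenthesized exponent suggests, and this is the origin of the index shift $\cT^{\ge u}\rightsquigarrow\cT^{\ge u+1}$ in the statement. The hypothesis $n\ge u+4$ guarantees we stay in the stable range where Lemmas~\ref{Dyck_rec} and~\ref{GREEN_rec} and its Corollary apply to every relevant block.

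Restricting this map to $\beta\in\cT^{\ge u}(D_n)\setminus\cT^{\ge u+1}(D_n)$, I would verify three things. (i) The level-$u$ violation: by hypothesis $\beta$ contains an $(u,w)$-Dyck suffix $\alpha(i,k)$ with all $U_{u-1,1}-wU_{u-2-\epsilon_{u-1},1}$ preceding non-diagonal edges of $\nu_i$ missing, and by the Corollary to Lemma~\ref{GREEN_rec} this sub-configuration has weight $q^{-1}$, cancelling the bare prefactor $q$ against that sub-word. Using parts~\eqref{step1}--\eqref{step3} of Lemma~\ref{GREEN_rec} together with Lemma~\ref{Dyck_rec}, the image of this sub-configuration under $F_{r_2}$ is the $(u+1,w)$-Dyck suffix of $D^{(1)}_{n+1}$ with all of its required preceding edges missing, again of weight $q^{-1}$, so the correct prefactor is reproduced. (ii) Conditions $C1$ and $C2$ transport verbatim: diagonal edges stay supported, and the edge forced to be supported before a short suffix by $C2$ is exactly the included vertical/terminal edge produced by the hook analysis in Lemma~\ref{z_rec}. (iii) The maximal violation level shifts by exactly $1$: since the expansion of a missing uncolored edge produces only edges and short suffixes, never a Dyck suffix, any $(u+1,w)$-violation appearing in the image must already have been a $(u,w)$-violation of $\beta$, and conversely no violation at level $\ge u+1$ in $\beta$ is allowed; this pins the image into $\cT^{\ge u+1}(D^{(1)}_{n+1})\setminus\cT^{\ge u+2}(D^{(1)}_{n+1})$. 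Bijectivity then follows from invertibility of the automorphism $F_{r_2}$, reading the hook decomposition backwards (colored hooks of types $1,2,3$ collapsing to the appropriate supported edges, uncolored hook sub-collections collapsing to missing edges) and using parts~\eqref{step1}--\eqref{step2} of Lemma~\ref{GREEN_rec} to contract a missing $D_{k-2}$ (with its first $D_{k-3-\epsilon_{k-3}}$ removed) against a colored $D_k$.

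The step I expect to be the main obstacle is (iii): showing that $F_{r_2}$ moves the \emph{top} violation level by precisely one and neither merges two violations nor creates a spurious higher-level violation out of the hook expansion of a missing edge. This will require an induction — most naturally on $n-u$, with the base case controlled by Lemma~\ref{filt_rest} (which forces $\cT^{\ge n-3}(D_n)$ to be empty, so both sides vanish once $u\ge n-3$) — and a careful accounting of the $\epsilon$/$\delta$ corrections in Lemmas~\ref{Dyck_rec} and~\ref{GREEN_rec} in the degenerate cases where some $r_i=1$. Once the dictionary of the second paragraph is established, the remaining work — the telescoping of the $(1+y^{r_2})^{-1}$ factors and the monomial-by-monomial matching of weights — is the same routine computation underlying Lemma~\ref{z_rec}.
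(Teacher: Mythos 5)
There is a genuine gap, and your route also diverges from the paper's. The paper does not transport collections of $D_n$ to collections of $D^{(1)}_{n+1}$ one at a time. It fixes $n=u+4$ (the general case reduces to this), uses Lemma~\ref{Dyck_rec} together with Lemma~\ref{filt_rest} to see that the only possible violation at level $n-4$ is the unique $(n-4,w)$-Dyck suffix $\alpha(wU_{n-5,2},U_{n-4,2})$ with its $U_{n-5,1}-wU_{n-6,1}$ preceding non-diagonal edges missing, evaluates that violating sub-configuration to $q^{-1}$ by the Corollary to Lemma~\ref{GREEN_rec}, and then identifies the sum over the rest of the path ($w-1$ copies of $D_{n-1}$, $r_{n-1}-1$ copies of $D_{n-2}$, $w-1$ copies of $D_{n-3}$) with $\left(q^{-1}x_{n-2}\right)^{w-1}\left(q^{-1}x_{n-3}\right)^{r_{n-1}-1}\left(q^{-1}x_{n-4}\right)^{w-1}$ via the \emph{simultaneous induction with Lemma~\ref{x_comp}}. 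The left-hand side is thus a closed-form word in $q$ and the $x_k$, and $F_{r_2}$ is applied simply through $F_{r_2}(q)=q$ and $F_{r_2}(x_k)=x^{(1)}_{k+1}$, with the right-hand side handled by the same block decomposition of $D^{(1)}_{n+1}$. Your sketch never invokes Lemma~\ref{x_comp} or any substitute for this evaluation of the non-violating part of the sum; that step is not an optional convenience but what makes ``applying $F_{r_2}$'' tractable at all.

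More importantly, the configuration-level dictionary you build your argument on does not exist as stated. By the very mechanism you quote from Lemma~\ref{z_rec}, $F_{r_2}$ sends a single collection to a \emph{sum} over collections: every missing edge of $D_n$ expands into all sub-collections of edges of a hook of $D^{(1)}_{n+1}$, so the image of a level-$u$ violating configuration (Dyck suffix plus a block of missing edges) includes many terms in which edges \emph{are} supported inside the region that $C3^{op}$ at level $u+1$ requires to be empty; moreover the count of required missing edges itself changes from $U_{u-1,1}-wU_{u-2-\epsilon_{u-1},1}$ to the corresponding shifted quantity for $D^{(1)}_{n+1}$. Hence the matching can only be an identity of summed weights, which your step (i) asserts but does not establish, and your step (iii) --- that the top violation level shifts by exactly one and no spurious higher-level violations are created --- is precisely the content of the lemma, which you explicitly defer as ``the main obstacle.'' The proposed induction on $n-u$ with base case supplied by the emptiness of $\cT^{\ge n-3}(D_n)$ has no force: both sides vanish there, and the nontrivial work begins exactly at $u=n-4$, the case the paper must and does treat directly. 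As written, the proposal is a plan whose decisive steps remain unverified rather than a proof.
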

\begin{proof}
 The proof follows by simultaneous induction with Lemma~\ref{x_comp}.  We will assume $n=u+4$, the case $n>u+4$ follows from this one using a similar argument.  Also we restrict to the case $\epsilon_{n-1}=0$, the case $\epsilon_{n-1}>0$ follows by a similar argument.  

 From Lemma~\ref{Dyck_rec}, we can see that $D_n$ begins with $w$ copies of $D_{n-1}$, $1\le w < r_n-1-\epsilon_n$, and the vertex $\nu_{wU_{n-5,2}}$ is the ending vertex of the last $D_{n-1}$.  Now $\alpha(wU_{n-5,2},U_{n-4,2})$ is the only $(n-4,w)$-Dyck suffix of $D_n$ and so $\beta\in\cT^{\ge n-4}(D_n)$ implies $\alpha(wU_{n-5,2},U_{n-4,2})\in\beta$ and none of the preceding $U_{n-5,1}-wU_{n-6,1}$ non-diagonal edges are contained in $\beta$.  Note that $wU_{n-4,1}-U_{n-5,1}+wU_{n-6,1}=r_2wU_{n-5,2}-U_{n-5,1}$ and so the lowest vertex of these missing edges is $\omega_{r_2wU_{n-5,2}-U_{n-5,1}}$.  Then Lemma~\ref{Dyck_rec} implies the subpath of $D_n$ from $\omega_0$ to $\omega_{r_2wU_{n-5,2}-U_{n-5,1}}$ consists of $w-1$ copies of $D_{n-1}$, followed by $r_{n-1}-1$ copies of $D_{n-2}$, and then $w-1$ copies of $D_{n-3}$.  We will define $j_i$ for $0\le i\le 2w+r_{n-1}-3$ so that the $\nu_{j_i}$ are the endpoints of these copies.  Any subpath $\alpha(i,k)$ can be decomposed as $\alpha(i,j_e),\alpha(j_e,j_{e+1}),\ldots,\alpha(j_{e+\ell},k)$ where all but the first are Dyck prefixes.  It is easy to see that $\alpha(i,j_e)$ has the same label/color as $\alpha(i,k)$ and if $\alpha(i,k)$ was an $(m,w')$-Dyck suffix then so is $\alpha(i,j_e)$.
 
 Combining the above considerations we see that $\displaystyle\sum\limits_{\beta\in\cT^{\ge u}(D_n)\setminus\cT^{\ge u+1}(D_n)} q\prod_{i=1}^{U_n} \beta_{[i]}$ can be rewritten as:
 \begin{align*}
  &\sum\limits_{w=1}^{r_n-2-\epsilon_n} q\left(\sum\limits_{\beta\in\cF(D_{n-1})}\prod_{i=1}^{U_{n-1}}\beta_{[i]}\right)^{w-1}\left(\sum\limits_{\beta\in\cF(D_{n-2})}\prod_{i=1}^{U_{n-2}}\beta_{[i]}\right)^{r_{n-1}-1}\left(\sum\limits_{\beta\in\cF(D_{n-3})}\prod_{i=1}^{U_{n-3}}\beta_{[i]}\right)^{w-1}q^{-1}\\
  &=\sum\limits_{w=1}^{r_n-2-\epsilon_n} q\left(q^{-1}x_{n-2}\right)^{w-1}\left(q^{-1}x_{n-3}\right)^{r_{n-1}-1}\left(q^{-1}x_{n-4}\right)^{w-1}q^{-1},
 \end{align*}
 where the equality follows from Lemma~\ref{x_comp}. Applying $F_{r_2}$ and noting that $F_{r_2}(q)=q$ completes the proof.
\end{proof}

\begin{lemma}\label{x_comp}
 Suppose $n\ge3$. Then
\begin{equation}\label{x_comp_eq}
 x_{n-1}=z_{n-1}-\sum\limits_{m=5}^n \underbrace{F_{r_1}F_{r_2}F_{r_1}\cdots}_{n-m}\left(\sum\limits_{\beta\in\cT^{\ge1}(D^{(m-n)}_m)\setminus\cT^{\ge2}(D^{(m-n)}_m)} q\prod_{i=1}^{U^{(m-n)}_m} \beta_{[i]}\right)=\sum\limits_{\beta\in\cF(D_n)} q\prod_{i=1}^{U_n} \beta_{[i]}.
\end{equation}
\end{lemma}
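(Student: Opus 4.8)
The plan is to read \eqref{x_comp_eq} as two separate identities and to prove both by a single induction on $n$ running simultaneously with Lemma~\ref{F_rec}. The first (``algebraic'') identity
\[x_{n-1}=z_{n-1}-\sum_{m=5}^n\underbrace{F_{r_1}F_{r_2}F_{r_1}\cdots}_{n-m}\left(\sum_{\beta\in\cT^{\ge1}(D^{(m-n)}_m)\setminus\cT^{\ge2}(D^{(m-n)}_m)}q\prod_{i=1}^{U^{(m-n)}_m}\beta_{[i]}\right)\]
relates the mutation dynamics to the Dyck path combinatorics; the second (``combinatorial'') identity asserts that this same expression equals $\sum_{\beta\in\cF(D_n)}q\prod_{i=1}^{U_n}\beta_{[i]}$, and is merely a reorganization of the defining sum of $z_{n-1}$ over $\tilde{\cF}(D_n)$. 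I would prove the algebraic identity by applying $F_{r_2}$ and feeding the result into Lemma~\ref{z_rec}, and the combinatorial identity by telescoping the finite filtration $\tilde{\cF}(D_n)\supseteq\cT^{\ge1}(D_n)\supseteq\cT^{\ge2}(D_n)\supseteq\cdots$ and matching each summand above with a single graded piece $\cT^{\ge u}(D_n)\setminus\cT^{\ge u+1}(D_n)$ by means of Lemma~\ref{F_rec}.

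For the algebraic identity, induct on $n$. The base cases $n=2$ and $n=3$ are checked directly from the degenerate paths $D_2$ (a single $(1,-1)$-downstep: here $\cF(D_2)=\{\{\alpha_1\}\}$, $\sum_\beta q\prod_i\beta_{[i]}=qy=xyx^{-1}=x_1$, and the outer sum is empty) and $D_3$ (a single horizontal edge: $\sum_\beta q\prod_i\beta_{[i]}=q(1+y^{r_1})x^{-1}=x_2$). For the inductive step, apply $F_{r_2}$ to the level-$(n-1)$ instance. On the left, $F_{r_2}(x_{n-2})=x^{(1)}_{n-1}$. For $F_{r_2}(z_{n-2})$, Lemma~\ref{z_rec} substitutes $z^{(1)}_{n-1}$ minus the suffix sum for $D^{(1)}_n$, which is precisely the ${}^{(1)}$-image of the $m=n$ summand of the level-$n$ sum. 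For the summands with $5\le m\le n-1$, the purely formal identity $F_{r_2}\circ\underbrace{F_{r_1}F_{r_2}\cdots}_{n-1-m}=\big(\underbrace{F_{r_1}F_{r_2}\cdots}_{n-m}\big)^{(1)}$ (using also $F_{r_2}(q)=q$ and that the parenthesized exponents on the Dyck path data shift consistently, since ${}^{(2)}$ is the identity) shows that $F_{r_2}$ carries the $m$-th summand of the level-$(n-1)$ sum to the ${}^{(1)}$-image of the $m$-th summand of the level-$n$ sum. Collecting terms yields the level-$n$ instance with every $r_k$ replaced by $r_{k+1}$; since ${}^{(1)}$ is an involution, this is the level-$n$ instance itself for the parameters $r_2,r_1$, hence---these being arbitrary---for all $r_1,r_2$. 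Only Lemma~\ref{z_rec} and the level-$(n-1)$ instance enter here.

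For the combinatorial identity, note that $z_{n-1}=\sum_{\beta\in\tilde{\cF}(D_n)}q\prod_{i=1}^{U_n}\beta_{[i]}$ by definition, and that a collection in $\tilde{\cF}(D_n)$ fails $C3$ exactly when it satisfies $C3^{op}$ for some $m\ge1$, so $\tilde{\cF}(D_n)=\cF(D_n)\sqcup\cT^{\ge1}(D_n)$. Since $\cT^{\ge u+1}(D_n)\subseteq\cT^{\ge u}(D_n)$ and $\cT^{\ge n-3}(D_n)=\varnothing$ by Lemma~\ref{filt_rest}, the filtration is finite, and hence $\sum_{\beta\in\cT^{\ge1}(D_n)}q\prod_i\beta_{[i]}=\sum_{u=1}^{n-4}\sum_{\beta\in\cT^{\ge u}(D_n)\setminus\cT^{\ge u+1}(D_n)}q\prod_i\beta_{[i]}$. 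It therefore suffices to prove, for $5\le m\le n$,
\[\underbrace{F_{r_1}F_{r_2}F_{r_1}\cdots}_{n-m}\left(\sum_{\beta\in\cT^{\ge1}(D^{(m-n)}_m)\setminus\cT^{\ge2}(D^{(m-n)}_m)}q\prod_{i=1}^{U^{(m-n)}_m}\beta_{[i]}\right)=\sum_{\beta\in\cT^{\ge n+1-m}(D_n)\setminus\cT^{\ge n+2-m}(D_n)}q\prod_{i=1}^{U_n}\beta_{[i]};\]
summing this over $m$ (with $u=n+1-m$) and using $\tilde{\cF}(D_n)\setminus\cT^{\ge1}(D_n)=\cF(D_n)$ then finishes. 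I would prove this displayed identity by induction on $n$ for fixed $m$: the case $n=m$ is immediate (the empty composition leaves the left side equal to the right), and for the step I rewrite $\underbrace{F_{r_1}F_{r_2}\cdots}_{n-m}=F_{r_1}\circ\big(\underbrace{F_{r_1}F_{r_2}\cdots}_{n-m-1}\big)^{(1)}$, use the $2$-periodicity of the $r_k$ to recognize the inner composition applied to the displayed suffix sum as the ${}^{(1)}$-image of the level-$(n-1)$ right-hand side $\sum_{\beta\in\cT^{\ge n-m}(D_{n-1})\setminus\cT^{\ge n+1-m}(D_{n-1})}q\prod_i\beta_{[i]}$ (inductive hypothesis), and then apply ${}^{(1)}$ to Lemma~\ref{F_rec}---legitimate because $m\ge5$ forces $n-1\ge(n-m)+4$, and because $F_{r_2}^{(1)}=F_{r_1}$---to identify $F_{r_1}$ of the latter with $\sum_{\beta\in\cT^{\ge n+1-m}(D_n)\setminus\cT^{\ge n+2-m}(D_n)}q\prod_i\beta_{[i]}$. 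This is the step that invokes Lemma~\ref{F_rec}; since Lemma~\ref{F_rec} is itself established from \eqref{x_comp_eq} at strictly smaller levels, the two fit together as a single induction on $n$.

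The obstacle is bookkeeping rather than ideas: each application of $F_{r_2}$ must be paired with the correct parenthesized-exponent shift (using $F_{r_2}^{(1)}=F_{r_1}$ and that ${}^{(2)}$ is the identity), the index correspondence $u\leftrightarrow n+1-m$ must be tracked faithfully, and one must confirm that the mutual citations between \eqref{x_comp_eq} and Lemma~\ref{F_rec} descend strictly in $n$ (the algebraic half of \eqref{x_comp_eq} at level $n$ calls Lemma~\ref{z_rec} and level $n-1$; Lemma~\ref{F_rec} at level $n$ calls \eqref{x_comp_eq} at levels $n-1,n-2,n-3$; the combinatorial half at level $n$ calls Lemma~\ref{F_rec} and the combinatorial identity at smaller levels), so that the whole is a genuine induction. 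A little care is also needed reading off $\cF(D_n)$ in the degenerate base cases.
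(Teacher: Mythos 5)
Your proposal is correct and follows essentially the same route as the paper, which disposes of this lemma in one line by invoking the simultaneous induction with Lemma~\ref{F_rec} (as in Lee--Schiffler's Lemma 20): your telescoping of the filtration $\tilde{\cF}(D_n)\supseteq\cT^{\ge1}(D_n)\supseteq\cdots$ via Lemma~\ref{filt_rest}, the use of Lemma~\ref{z_rec} for the recursive step, and the matching of graded pieces through Lemma~\ref{F_rec} are exactly the details that induction is meant to carry. Your bookkeeping of the ${}^{(1)}$-shifts and the check that the mutual citations with Lemma~\ref{F_rec} descend strictly in $n$ are accurate, so there is nothing to correct.
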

\begin{proof}
 This follows from simultaneous induction with Lemma~\ref{F_rec} as in the proof of \cite[Lemma 20]{ls}.
\end{proof}
\end{proof}

\noindent\textbf{Acknowledgements.} This project was initiated while the author was visiting the new Center for Mathematics at Notre Dame in June of 2011.  We would like to thank CMND for their hospitality.

\end{document}